\documentclass[12pt]{amsart}
\usepackage{amsmath, amssymb, amsthm, amsfonts, mathrsfs,latexsym}
\usepackage{enumerate}

\makeatletter
\@namedef{subjclassname@2020}{%
  \textup{2020} Mathematics Subject Classification}
\makeatother

\theoremstyle{plain}
\newtheorem{prop}{Proposition}[section]

\newtheorem{lemma}[prop]{Lemma}

\newtheorem{theorem}[prop]{Theorem}
\newtheorem*{maintheorem}{Main Theorem}

\usepackage{graphicx, epsfig}

\usepackage[colorlinks=true,linkcolor=blue,citecolor=red]{hyperref}
\usepackage{enumitem}

\newcommand{\B}{\ensuremath{\mathcal{B} } }
\newcommand{\R}{\ensuremath{\mathbb{R} } }

\newcommand{\N}{\ensuremath{\mathbb{N} } }
\newcommand{\hC}{\ensuremath{\mathcal{C} } }
\newcommand{\Ps}{\ensuremath{\mathcal{P}} }

\newcommand{\Id}{\ensuremath{\mathrm{Id}} }

\newcommand{\Aut}{\ensuremath{\mathrm{Aut}} }
\newcommand{\Mod}{\ensuremath{\mathrm{Mod}} }

\newcommand{\coloneqq}{\ensuremath{\colon\!\!\!\!=} }

\numberwithin{equation}{section}

\begin{document}


\baselineskip=17pt


\title[Borel Completeness of STS]{Borel completeness of the class of countable Steiner triple systems}

\author{Guangyin Ma}
\address{School of Mathematics\\Sichuan University\\24 South Section 1, First Ring Road\\Chengdu, Sichuan 610065\\
China}
\email{guangyinma@foxmail.com}

\author{Shichang Song}
\address{School of Mathematics and Statistics\\Beijing Jiaotong University\\3 Shangyuancun\\Haidian, Beijing 100044\\
China}
\email{ssong@bjtu.edu.cn}

\date{}

\begin{abstract}
We show that the isomorphism relation for countable Steiner triple systems is Borel complete, that is, the isomorphism relation for arbitrary countable structures is Borel reducible to that for countable Steiner triple systems. To prove it, we construct a faithful Borel reduction from countable graphs to countable Steiner triple systems, that is, a Borel assignment $\theta$ that associates every countable graph $G$ with a countable Steiner triple system $\theta(G)$ so that $G\cong G'$ if and only if $\theta(G)\cong\theta(G')$. Moreover, $\theta$ preserves automorphisms which means that $\Aut(G)\cong\Aut(\theta(G))$.
\end{abstract}

\subjclass[2020]{Primary: 03E15; Secondary: 05B07, 05C65}

\keywords{Borel completeness, Borel reducibility, isomorphism relations, 3-uniform hypergraphs, Steiner triple systems}

\maketitle

\section{Introduction}

A \emph{partial Steiner triple system} $(S,\B)$ is a set $S$ together with a collection $\B$ of subsets of $S$ of size 3 such that every pair of two distinct elements of $S$ belong to at most one element of $\B$. Elements of $S$ are called \emph{points}, and elements of $\B$ are called \emph{blocks}. A partial Steiner triple system is a \emph{Steiner triple system} if every pair of two distinct points belong to exactly one block. Recently, the study of countable Steiner triple systems has drawn increasing attention from both the model theory community and the design theory community. For instance, model theorists Barbina and Casanovas \cite{BC} investigated the model theory of Steiner triple systems, and design theorists Horsley and Webb \cite{HW} constructed uncountably many countable homogeneous Steiner triple systems.
In this paper, we will show that the classification problem for countable Steiner triple systems is equivalent to the classification problem for countable graphs, and thus the classification problem is as complex as possible among countable structures.

Borel complexity theory or invariant descriptive set theory provides a framework to compare complexities of different classification problems; Gao's book \cite{G} is a good reference.
In this theory, classification problems for countable structures are regarded as equivalence relations on standard Borel spaces. A \emph{standard Borel space} is a Polish space equipped just with the $\sigma$-algebra of its Borel sets. Every standard Borel space is isomorphic to a Borel subspace of $\R$.
Let $E$ and $F$ be equivalence relations on standard Borel spaces $X$ and $Y$ respectively. We say that $E$ is \emph{Borel reducible} to $F$, written as $E\leq_B F$, if there is a Borel function $f\colon X\to Y$ such that for all $x,x'\in X$,
$$x E x' \iff f(x) F f(x').$$
Such $f$ is called a \emph{Borel reduction} from $E$ to $F$.
Then, $\leq_B$ defines a partial preordering on equivalence relations on standard Borel spaces.
A Borel reduction $f\colon X\to Y$ from $E$ to $F$ is a \emph{faithful Borel reduction} if for every $E$-invariant Borel set $A\subseteq X$, the $F$-saturation of $f(A)$, $[f(A)]_F\coloneqq \{x\in Y \mid xFy \textrm{ for some } y\in f(A)\}$, is Borel.

Let $L=\{R_i\mid i\in I\}$ be a countable relational language, where $I$ is countable and each $R_i$ is an $n_i$-ary relation symbol. Let $\Mod(L)$ denote the space of all countable $L$-structures with universe $\N$. Then, $\Mod(L)$ is the set
$$\prod_{i\in I}\Ps(\N^{n_i}),$$
where $\Ps(\N^{n_i})$ is the power set of $\N^{n_i}$. This space is equipped with the product topology by identifying $\Ps(\N^{n_i})$ with $2^{\N^{n_i}}$ for each $i$. Then, $\Mod(L)$ is a standard Borel space.
Let $\sigma$ be an $L_{\omega_1,\omega}$-sentence, where countable conjunctions and disjunctions are allowed. Define
$$\Mod(\sigma)\coloneqq\{M\in\Mod(L)\mid M\models\sigma\}.$$
Then $\Mod(\sigma)$ is a Borel subset of $\Mod(L)$, and thus a standard Borel space by Kuratowski Theorem.
Also, $\Mod(\sigma)$ is the class of countable models of $\sigma$ with universe $\N$.
Let $\cong_\sigma$ denote the isomorphism relation on $\Mod(\sigma)$, which is the classification problem for countable models of $\sigma$. By the L\'{o}pez-Escobar theorem, a $\cong$-invariant subset $B\subseteq\Mod(L)$ is Borel if and only if there is an $L_{\omega_1,\omega}$-sentence $\tau$ such that $B=\Mod(\tau)$.

Friedman and Stanley \cite{FS} considered Borel reducibility between equivalence relations to compare isomorphism relations, and thus characterize complexities of different classification problems.
An equivalence relation $E$ is \emph{(faithfully) Borel complete} if for every countable relational language $L$ and every $L_{\omega_1,\omega}$-sentence $\sigma$, there is a (faithful) Borel reduction from $\cong_{\sigma}$ to $E$. Thus, a Borel complete equivalence relation is a $\leq_B$-maximal element among equivalence relations $\cong_\sigma$.
If $\cong_\sigma$ is (faithfully) Borel complete, we also say that the class of countable models of $\sigma$ is \emph{(faithfully) Borel complete}.
It is folklore that the class of countable graphs is faithfully Borel complete, which is the best-known example.
In Friedman and Stanley's seminal work \cite{FS}, they proved that the classes of countable trees, countable linear orderings, countable fields of characteristic $p$ ($p$ is a prime number or $p=0$) are all Borel complete, and many others. After Friedman and Stanley's work, more classes are shown to be Borel complete. For instance, Camerlo and Gao \cite{CG} proved that the class of countable boolean algebras is Borel complete, and Clemens, Coskey, and Potter \cite{CCP} proved that the class of countable vertex-transitive digraphs and partial orders are Borel complete.

More recently, a longstanding problem by Friedman and Stanley was solved, which is that the class of countable torsion free abelian groups is Borel complete. It was announced by Paolini and Shelah \cite{PS}, and later Laskowski and Ulrich \cite{LU} gave a new proof. Moreover, the class of countable torsion free abelian groups was shown to be faithfully Borel complete by Paolini and Shelah \cite{PS2}.

Let $\Gamma$ be the theory of graphs and let $\Sigma$ be the theory of Steiner triple systems. The main result of this paper is the following theorem.

\begin{maintheorem}\label{main}
The class of countable Steiner triple systems is faithfully Borel complete. Moreover, there is a faithful Borel reduction $\theta\colon\Mod(\Gamma)\to\Mod(\Sigma)$ such that for every countable graph $M$ with universe $\N$, we have that $\Aut(\theta(M))\cong\Aut(M)$.
\end{maintheorem}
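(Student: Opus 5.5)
Since countable graphs are faithfully Borel complete, and a composition of faithful Borel reductions is again faithful, it suffices to build a faithful Borel reduction $\theta\colon\Mod(\Gamma)\to\Mod(\Sigma)$. The plan is a two-stage free construction. First, we encode a graph $M$ as a partial Steiner triple system $P(M)$ that is rigid enough to recover $M$. Second, we complete $P(M)$ to a Steiner triple system $\theta(M)$ by the standard free (Hall-type) completion. At each stage we add, for every pair of points not yet covered by a block, one fresh point $z$ together with the block $\{x,y,z\}$; after $\omega$ stages we take the union. Since each stage is canonical given an enumeration, any isomorphism $P(M)\cong P(M')$ extends uniquely to the free completions. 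Coding the points of the completion by natural numbers in a Borel way (for instance, by indexing new points by finite codes of their "generation history") makes $\theta$ Borel. It also makes it clear that $M\cong M'$ implies $\theta(M)\cong\theta(M')$.

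For the encoding, we want $P(M)$ to contain gadgets that the completion cannot imitate. The vertices of $M$ become "vertex points". An edge $\{u,v\}$ is encoded by a small finite configuration that is not generated freely, say a Fano-plane-like or Pasch-type configuration attached to $u$ and $v$. A non-edge is left free (or encoded by a different distinguishable configuration), and vertex points are marked by a further configuration. The key structural fact to prove is a \emph{freeness lemma}: in the free completion, every finite configuration containing a "closed" substructure, such as a Pasch configuration (four blocks on six points), lies entirely within the original partial system $P(M)$. The proof goes by induction on stages. A newly added point lies in exactly one block with older points at its birth stage, so no new Pasch configuration can have a point of maximal stage. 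Hence the vertex points, the edge gadgets, and therefore $M$ itself are definable in $\theta(M)$ from the Pasch configurations, in an isomorphism-invariant way.

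From this we get the reverse direction and the automorphism statement. Any isomorphism $\theta(M)\to\theta(M')$ maps Pasch configurations to Pasch configurations, so it restricts to an isomorphism $P(M)\to P(M')$ and hence induces $M\cong M'$. For $\Aut(\theta(M))\cong\Aut(M)$, three maps must be checked. Restriction from $\Aut(\theta(M))$ to $\Aut(P(M))$ is injective, since the completion is determined by the base. Extension is surjective by the canonicity of the completion. Finally, $\Aut(P(M))\to\Aut(M)$ is a bijection, which requires the gadgets to be rigid so that no nontrivial automorphism fixes all vertex points. Faithfulness also follows from definability. The image $\theta(\Mod(\Gamma))$ is characterized by an $L_{\omega_1,\omega}$-sentence saying the system is the free completion of its Pasch-closed part, and that this part has the gadget shape. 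For an invariant Borel $A=\Mod(\tau)$, the saturation $[\theta(A)]$ is axiomatized by relativizing $\tau$ to the interpreted graph, which is Borel by López-Escobar.

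I expect the main obstacle to be the freeness lemma together with the gadget design. The gadgets must be rigid finite partial systems that contain Pasch configurations recoverable only in the intended way. The free completion must create no Pasch configurations, either among new points or mixing old and new points. Gadgets attached to different vertices must not combine into spurious copies. I would first try gadgets built from disjoint Pasch configurations glued at single points, with distinct counts distinguishing vertex markers from edge markers. The nonexistence of new Pasch configurations I would verify by looking at the last-born point of a hypothetical configuration: it has degree at least two inside the configuration, but only one block at its birth stage contains older points, which gives a contradiction.
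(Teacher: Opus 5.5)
Your architecture is sound and matches the paper's last step. You encode by finite closed gadgets, freely complete, observe that the completion creates no new finite closed configurations, interpret, and transfer automorphisms by restriction and canonical extension (Lemma~\ref{free generation preserves Aut}). Your last-born-point argument is correct, and it is exactly what justifies the paper's unproved assertion that free generation produces no new copies of its gadget. State the lemma correctly, though: every finite configuration in which each point lies on at least two of its blocks is contained in $P(M)$; a configuration merely \emph{containing} such a part need not be.

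For the bare reduction (the first sentence of the theorem), your Pasch idea can be made to work. For example, put two Pasch configurations through each vertex point, and for each edge $\{u,v\}$ one Pasch configuration in which $u,v$ are collinear. Distinct gadgets share at most one point and no gadget block contains three vertex points. Since the four blocks of a Pasch configuration meet pairwise with each point on exactly two of them, there are no spurious Pasch configurations. The vertex points are then the points lying on at least two Pasch configurations.

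The genuine gap is the step ``$\Aut(P(M))\to\Aut(M)$ is a bijection''. The whole automorphism clause rests on it, and it depends on gadgets you have not specified. Each gadget's stabilizer of its attachment set must act on that set faithfully and as its full symmetric group. Faithfulness (trivial pointwise stabilizer) gives injectivity. Surjectivity needs, for an edge gadget, an automorphism swapping $u$ and $v$, since graph automorphisms may flip edges; so ``rigid'' alone is not the right requirement.

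Your tentative gadgets already fail faithfulness:
\begin{itemize}
\item a Pasch configuration has automorphism group $S_4$, whose point stabilizers have order $4$;
\item the Fano plane has two-point stabilizers of order $4$;
\item several copies glued at one point can be interchanged.
\end{itemize}
Such automorphisms extend by the identity to the rest of $P(M)$ and then to the completion. So for an infinite rigid graph $M$, $\Aut(\theta(M))$ is uncountable while $\Aut(M)$ is trivial. A single Pasch configuration with $u,v$ collinear does have exactly the right symmetry for an edge. What is missing is a closed vertex marker with trivial stabilizer of its attachment point that does not interfere with the edge gadgets.

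The paper supplies precisely this kind of object: the Steiner triple system $T$ of order $15$ (\#43 in the Handbook). $T$ has no nontrivial subsystems, and its automorphism group $S_3$ fixes the block $\{2,13,15\}$ setwise and induces every permutation of it. The paper first reduces graphs, via $3$-uniform hypergraphs, to partial systems $G(A)$ in which every point lies on a block. It then attaches a copy of $T$ along every block, so no separate vertex markers are needed at all.
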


Let $\Gamma_3$ be the theory of 3-uniform hypergraphs and let $\Sigma_0$ be the theory of partial Steiner triple systems. We will prove the main theorem by constructing three faithful Borel reductions $F\colon\Mod(\Gamma)\to\Mod(\Gamma_3)$, $G\colon\Mod(\Gamma_3)\to\Mod(\Sigma_0)$, and $H\colon\Mod(\Sigma_0)\to\Mod(\Sigma)$. Moreover, each reduction preserves automorphisms.


\section{Preliminaries}
In this section, we fix our notations. Also, we present definitions of graphs, hypergraphs, and (partial) Steiner triple systems.

\subsection{Notations}
Let $X$ be a set and let $n$ be a positive integer. Let $[X]^n$ denote the set of all subsets of $X$ of size $n$.
For two $n$-tuples $\bar{x}$ and $\bar{y}$, we write $\bar{x}\neq \bar{y}$ as shorthand for $\bigvee\limits_{i<n} (x_i\neq y_i)$,
and we write $ \{\bar{x}\}\neq \{\bar{y}\} $ as shorthand for
$$\bigl(\bigvee\limits_{i<n}\bigwedge\limits_{j<n} x_i\neq y_j\bigr) \vee\bigl(\bigvee\limits_{i<n}\bigwedge\limits_{j<n} y_i\neq x_j\bigr).$$
Let $\bar{x}$ be an $n$-tuple and let $\bar{y}$ be an $m$-tuple. Let $\varphi(\bar{x})$ be an $L_{\omega_1,\omega}$ formula with $n$ free variables.
We use abbreviations $\exists^{\neq}\bar{x}_0,\cdots,\bar{x}_{k-1}\varphi$ and $\forall^{\neq}\bar{x}_0,\cdots,\bar{x}_{k-1}\varphi$ for distinct tuples, and
$\exists^{\neq}\{\bar{x}_0\},\cdots,\{\bar{x}_{k-1}\}\varphi$ and $\forall^{\neq}\{\bar{x}_0\},\cdots,\{\bar{x}_{k-1}\}\varphi$ for distinct sets.
Then we are able to use the following notations to describe the exact number of tuples or sets satisfying an $L_{\omega_1,\omega}$ formula $\varphi(\cdot)$ with $n$ free variables:
\begin{align*}
\varphi(\cdot)^{(\cdot) \geqslant k} &\coloneqq \exists^{\neq}\bar{x}_0,\dots,\bar{x}_{k-1}\bigwedge\limits_{i<k}\varphi(\bar{x}_i),\\
\varphi(\cdot)^{\{\cdot\} \geqslant k} &\coloneqq \exists^{\neq}\{\bar{x}_0\},\dots,\{\bar{x}_{k-1}\}\bigwedge\limits_{i<k}\varphi(\bar{x}_i),\\
\varphi(\cdot)^{(\cdot) = k} &\coloneqq \neg\left( \varphi(\cdot)^{(\cdot) \geqslant k+1}\right) \wedge\varphi(\cdot)^{(\cdot) \geqslant k},\\
\varphi(\cdot)^{\{\cdot\} = k} &\coloneqq \neg\left( \varphi(\cdot)^{\{\cdot\} \geqslant k+1}\right)  \wedge \varphi(\cdot)^{\{\cdot\} \geqslant k},\\
\varphi(\cdot)^{(\cdot) = \omega} &\coloneqq \bigwedge\limits_{k\in\omega} \varphi(\cdot)^{(\cdot) \geqslant k},\\
\varphi(\cdot)^{\{\cdot\} = \omega} &\coloneqq \bigwedge\limits_{k\in\omega} \varphi(\cdot)^{\{\cdot\}\geqslant k}.
\end{align*}


\subsection{Graphs, hypergraphs and Steiner triple systems}
A \emph{graph} $G$ is a pair $G=(V,E)$, where $V$ is the set of elements called \emph{vertices}, and $E$ is a subset of $[V]^2$ called \emph{edges} such that for all $x,y\in V$, $(x,y)\in E$ if and only if $(y,x)\in E$.
Let $L^R=\left\lbrace R \right\rbrace$ denote the language of graphs, where $R$ is a binary relation symbol. The axiom $\Gamma$ for graphs is the conjunction of $\forall x\neg R(x,x)$ and $\forall x\forall y R(x,y)\leftrightarrow R(y,x)$.
A \emph{3-uniform hypergraph} is a pair $H=(V,E)$, where $V$ is the set of elements called \emph{vertices}, and $E$ is a subset of $[V]^3$ called \emph{edges} such that for all $x_1,x_2,x_3\in V$, if $(x_1,x_2,x_3)\in E$ then $(x_{\pi(1)},x_{\pi(2)},x_{\pi(3)})\in E$ for all permutations $\pi$ of $\{1,2,3\}$.
Let $L^E =\{E\}$ denote the language of hypergraphs, where $E$ is a ternary relation symbol. Then the axiom $\Gamma_3$ for 3-uniform hypergraphs is the conjunction of the following sentences:
$$ \forall x\,\forall y\,\forall z\;E(x,y,z)\rightarrow\left( x\neq y \wedge y\neq z\wedge z\neq x\right) , $$
$$ \forall x\,\forall y\,\forall z\;\left( E(x,y,z)\leftrightarrow E(y,x,z)\right) \wedge\left( E(x,y,z)\leftrightarrow E(x,z,y)\right).$$
A \emph{partial Steiner triple system} is a pair $(S,\B)$, where $S$ is the set of elements called \emph{points}, and $\B$ is a subset of $[S]^3$ called \emph{blocks} such that every two distinct points belong to at most one block. A partial Steiner triple system is a \emph{Steiner triple system} if every two distinct points belong to exactly one block.
The book \cite{CR} is a good reference for Steiner triple systems.
A (partial) Steiner triple system can be viewed as a 3-uniform hypergraph, where points are vertices and blocks are edges. Let $\Sigma_0$ denote the axiom for partial Steiner triple systems. Then $\Sigma_0$ is the conjunction of $\Gamma_3$, the axiom for 3-uniform hypergraphs, and the following $L^E$-sentence:
$$\forall x\,\forall y\,\forall z_1\,\forall z_2\;\left( E(x,y,z_1)\wedge E(x,y,z_2)\right) \rightarrow \left( z_1 = z_2 \right).$$
Let $\Sigma$ denote the axiom for Steiner triple systems, which is the conjunction of $\Sigma_0$ and the following $L^E$-sentence:
$$\forall x \forall y \left( x\neq y \right)\rightarrow\left(\exists z E(x,y,z)\right).$$

Naturally, for every Steiner triple system $(S,\B)$, it induces a multiplication $*$ on $S$ as follows: (i) for all $x\in S$, $x*x\coloneqq x$, (ii) for all distinct $x,y\in S$, $x*y\coloneqq z$, where $z\in S$ is the unique element such that $\{x,y,z\}\in\B$. Then $(S,*)$ becomes a \emph{Steiner quasigroup} satisfying: (i) $a*b=b*a$, (ii) $a*a=a$, (iii) $a*(a*b)=b$ for all $a,b\in S$. Clearly, every Steiner quasigroup $(S,*)$ also naturally induces a Steiner triple system $(S,\B)$. So, Steiner triple systems and Steiner quasigroups are essentially the same, and we view them as the same objects in this paper.
For a partial Steiner triple system $(S,\B)$, its induced multiplication $*$ is only a partial function, which means that if $a,b\in S$ are not in a block, then $a*b$ is not defined. Like the construction of free groups, a partial Steiner triple system $(S,\B)$ together with a partial multiplication $*$ can freely generate a quasigroup $\langle S\rangle$. Since every Steiner quasigroup as in its definition satisfies three relations, we quotient out by these three relations to get a Steiner quasigroup $\hat S\coloneqq\langle S\rangle/\sim$. We call $\hat S$ \emph{the freely generated Steiner quasigroup by the partial Steiner triple system $S$}.

Since this free generation construction is very important in our proof of the main theorem, we give more details here. Given a partial Steiner triple system $(S,\B)$, we let $S_0\coloneqq S$ and $\B_0\coloneqq\B$. For every two distinct $a,b\in S$, if there is $c\in S$ such that $\{a,b,c\}\in\B$, then we let $\mu(\{a,b\})\coloneqq c$, otherwise, we let $\mu(\{a,b\})$ be a new element. Let $S_1\coloneqq S_0\cup \{\mu(\{a,b\})\mid a\neq b\in S\}$ and $\B_1\coloneqq\B_0\cup \{\{a,b,\mu(\{a,b\})\}\mid a\neq b\in S_0\}$. Then $(S_1,\B_1)$ is still a partial Steiner triple system. Recursively, for each $n\in\N$, we define the partial Steiner triple system $(S_n,\B_n)$. Let $\hat S\coloneqq\bigcup_{n\in\N}S_n$ and $\hat \B\coloneqq\bigcup_{n\in\N}\B_n$. Then $(\hat S,\hat \B)$ is a Steiner triple system, and is called \emph{the freely generated Steiner triple system by the partial Steiner triple system $S$}. For each $n\in\in\N$, we call $S_n$ \emph{Level $n$} in the free generation construction. 

The following lemma is used in the proof of the main theorem.

\begin{lemma}\label{free generation preserves Aut}
Let $S$ be a partial Steiner triple system and let $\hat S$ be the freely generated Steiner triple system by $S$. Suppose that every automorphism of $\hat S$ fixes $S$ setwise. Then $\Aut(S)\cong\Aut(\hat S)$.
\end{lemma}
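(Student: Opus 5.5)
We will exhibit an explicit isomorphism. Consider the restriction map $\rho\colon\Aut(\hat S)\to\Aut(S)$, $\rho(g)=g|_S$, and the extension map $\varepsilon\colon\Aut(S)\to\Aut(\hat S)$, defined level by level along the free generation construction. We then check that $\rho$ and $\varepsilon$ are mutually inverse group homomorphisms.

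First, $\rho$ is well defined. By hypothesis every $g\in\Aut(\hat S)$ satisfies $g(S)=S$, and the same holds for $g^{-1}$. A triple $\{a,b,c\}\subseteq S$ lies in $\B$ if and only if it lies in $\hat\B$. Indeed, every block added at Level $n+1$ contains a new point $\mu(\{a,b\})\notin S_n$ whenever $a,b$ are not already in a block of $\B_n$. So the blocks of $\hat S$ inside $S$ are exactly those of $\B$, and $g|_S$ is an automorphism of $S$. Clearly $\rho$ is a homomorphism.

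Second, we construct $\varepsilon$. Given $f\in\Aut(S)$, define $f_0=f$ and extend recursively: if $f_n\in\Aut(S_n,\B_n)$ is a bijection preserving blocks, set $f_{n+1}(\mu(\{a,b\}))=\mu(\{f_n(a),f_n(b)\})$ for each new element. This is well defined and bijective on $S_{n+1}\setminus S_n$ because the new elements are indexed injectively by the pairs $\{a,b\}$ with no block in $\B_n$, and $f_n$ permutes this set of pairs. If $\{a,b\}$ already had a block, then $\mu(\{a,b\})=c\in S_n$, and $f_n(c)=\mu(\{f_n(a),f_n(b)\})$ since $f_n$ preserves blocks. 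Hence $f_{n+1}$ maps $\B_{n+1}$ onto $\B_{n+1}$. Let $\hat f=\bigcup_n f_n$. Then $\hat f$ is an automorphism of $\hat S$, and $f\mapsto\hat f$ is a homomorphism by the uniqueness argument below.

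Third, we show injectivity of $\rho$ (equivalently, $\varepsilon\circ\rho=\Id$). This is the key step. Let $g\in\Aut(\hat S)$ with $g|_S=\Id$. We prove by induction on $n$ that $g$ fixes $S_n$ pointwise. Suppose $g$ fixes $S_n$ pointwise. Any $x\in S_{n+1}\setminus S_n$ equals $\mu(\{a,b\})$ for some $a,b\in S_n$, so $x$ is the unique third point of the block of $\hat S$ through $a,b$. Since $g$ fixes $a$ and $b$ and preserves blocks, it fixes $x$. The same argument shows that any $g$ agrees with $\widehat{g|_S}$ level by level, giving $\varepsilon\circ\rho=\Id$. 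Also $\rho\circ\varepsilon=\Id$ is immediate since $\hat f|_S=f$. Thus $\Aut(S)\cong\Aut(\hat S)$.

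The main obstacle is the bookkeeping in the second step. Level $n+1$ must be well defined up to the labelling of new points, and $f_{n+1}$ must be a genuine bijection that preserves exactly the blocks. The hypothesis that automorphisms fix $S$ is needed only for $\rho$ to land in $\Aut(S)$.
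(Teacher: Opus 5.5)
Your proof is correct and follows the same route as the paper. Both arguments extend each automorphism of $S$ level by level to $\hat S$, restrict automorphisms of $\hat S$ to $S$ using the hypothesis, and check that these two maps are mutually inverse homomorphisms. You also supply the details the paper dismisses as ``clear'': first, that $\hat\B\cap[S]^3=\B$, so the restriction really is an automorphism of $S$; second, the induction on levels showing that an automorphism of $\hat S$ is determined by its restriction to $S$, which gives both the homomorphism property of the extension map and $\varepsilon\circ\rho=\mathrm{Id}$.
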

\begin{proof}
  For each $n\in\N$, let $S_n$ be Level $n$ in the free generation construction. Let $\sigma$ be an automorphism of $S$. Naturally, $\sigma$ can be extended to an automorphism $\sigma_n$ of $S_n$, for each $n\in\N$. Let $\hat\sigma\coloneqq \bigcup_{n\in\N}\sigma_n$. Then $\hat\sigma$ is an automorphism of $\hat S$. Define $f(\sigma)\coloneqq \hat\sigma$. It is easy to verify that (i) $f(\sigma_1\sigma_2)=f(\sigma_1)f(\sigma_2)$ for $\sigma_1,\sigma_2\in\Aut(S)$, and (ii) $f(\mathrm{Id}_S)=\mathrm{Id}_{\hat S}$. Thus, $f\colon\Aut(S)\to\Aut(\hat S)$ is a homomorphism. Let $\tau$ be an automorphism of $\hat S$. By assumption, $\tau(S)=S$, and thus $\tau|_S\in \Aut(S)$. Define $g(\tau)\coloneqq \tau|_S$. It is easy to verify that (i) $g(\tau_1\tau_2)=g(\tau_1)g(\tau_2)$ for $\tau_1,\tau_2\in\Aut(\hat S)$, and (ii) $g(\mathrm{Id}_{\hat S})=\mathrm{Id}_S$. Thus, $g\colon\Aut(\hat S)\to\Aut(S)$ is a homomorphism. Clearly, $g\circ f=\mathrm{Id}$ and $f\circ g=\mathrm{Id}$, and thus $\Aut(S)\cong\Aut(\hat S)$.  
\end{proof}

\section{The class of countable 3-uniform hypergraphs is faithfully Borel complete}
In this section, we will prove that the class of 3-uniform hypergraphs is faithfully Borel complete.
Let $\Gamma$ be the theory of graphs and let $\Gamma_3$ be the theory of 3-uniform hypergraphs.

\begin{prop}\label{hg}
The class of countable 3-uniform hypergraphs is faithfully Borel complete.
Moreover, there is a faithful Borel reduction $F\colon\Mod(\Gamma)\to\Mod(\Gamma_3)$ such that for every countable graph $M$ with universe $\N$, we have $\Aut(F(M))\cong\Aut(M)$.
\end{prop}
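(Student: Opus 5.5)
We build $F(M)$ by replacing each edge of the graph $M$ with a rigid gadget made of triples, so that from the 3-uniform hypergraph we can recover, in a definable and uniform way, the original vertices and edges. One concrete choice goes as follows. Let $M=(\N,R)$ be a countable graph.

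The universe of $F(M)$ will consist of:
\begin{itemize}
\item a copy $\{v_n\mid n\in\N\}$ of the vertices;
\item for each vertex $v_n$, a private ``tag'' gadget: four new points $a_n,b_n,c_n,d_n$ with the triples $\{v_n,a_n,b_n\}$, $\{v_n,c_n,d_n\}$, $\{a_n,c_n,d_n\}$;
\item for each unordered edge $\{m,n\}\in R$, a new point $e_{mn}$ with the triple $\{v_m,v_n,e_{mn}\}$.
\end{itemize}
We enumerate all these points by $\N$ via a fixed recursive bookkeeping, for instance by interleaving them with an infinite set of isolated dummy points, so that the universe is always $\N$. The map $M\mapsto F(M)$ is then continuous: whether a given triple holds depends on a single bit of $R$.

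The key step is to show that the vertex points are definable without parameters by a first-order formula $\varphi_V(x)$ in the language $\{E\}$. Roughly, $x$ is a vertex point exactly when it lies in two triples $\{x,a,b\}$ and $\{x,c,d\}$ such that $\{a,c,d\}$ is also a triple and $a,b,c,d$ have the appropriate (bounded) degrees. Edge points have degree exactly $1$, and the gadget points have fixed small degrees. Vertex points, however, can have arbitrary degree, so the gadget degrees must be chosen to be distinguishable. This bookkeeping is the main obstacle: one must check that no edge point or gadget point can accidentally satisfy $\varphi_V$, and that two vertex points cannot share a gadget. If the degrees clash, I would lengthen the gadget, for example to a small rigid configuration of five triples on seven points, to force a unique pattern.

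Once $\varphi_V$ is established, every isomorphism $F(M)\to F(M')$ maps vertex points to vertex points. It also preserves the relation ``$v_m,v_n$ lie in a common triple whose third point has degree $1$'', which holds precisely when $\{m,n\}\in R$. Hence it restricts to a graph isomorphism $M\cong M'$. Conversely, a graph isomorphism extends uniquely to the gadgets and edge points, since each is determined by its vertex or vertices; the dummy isolated points are matched by any bijection, there being $\omega$ of them in both structures. Thus $F$ is a reduction. Setting the dummies aside for the automorphism claim (we take no dummies and instead use a recursive enumeration of the countably infinite set of gadget points), the restriction map $\Aut(F(M))\to\Aut(M)$ is a homomorphism. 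It is injective because every non-vertex point is determined by vertex points, and surjective by the extension above, so $\Aut(F(M))\cong\Aut(M)$.

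For faithfulness, the image $F(\Mod(\Gamma))$ is $\cong$-saturated to the set of hypergraphs satisfying an $L_{\omega_1,\omega}$-sentence $\tau$. This sentence says that every point is a vertex point, a gadget point attached to a unique vertex point, or an edge point attached to exactly two vertex points, with no other triples, and that there are infinitely many vertex points. Given an invariant Borel $A=\Mod(\sigma)$, we translate $\sigma$ by relativizing quantifiers to $\varphi_V$ and replacing $R$ with the edge-point formula, obtaining $\sigma^*$. Then $[F(A)]_{\cong}=\Mod(\tau\wedge\sigma^*)$, which is Borel by López-Escobar. Finally, composing with the folklore faithful reduction of arbitrary $\cong_\sigma$ to graphs gives faithful Borel completeness, since a composition of faithful reductions is faithful.
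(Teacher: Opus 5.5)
\textbf{There is a genuine gap.} Your strategy (code the graph in a hypergraph so that vertices and edges are $L_{\omega_1,\omega}$-definable, then translate sentences) is sound. However, the concrete gadget you chose is not rigid, and it fails exactly at the step you flagged as the main obstacle.

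\emph{The swap of $c_n$ and $d_n$.} In the triples $\{v_n,c_n,d_n\}$ and $\{a_n,c_n,d_n\}$, the points $c_n$ and $d_n$ lie in exactly the same triples. So the transposition $(c_n\;d_n)$ is an automorphism of $F(M)$ for every $n$. Consequences:
\begin{enumerate}
\item $\Aut(F(M))$ contains a copy of $(\mathbb{Z}/2\mathbb{Z})^{\N}$ for every graph $M$.
\item The restriction map $\Aut(F(M))\to\Aut(M)$ is never injective, so your claim that every non-vertex point is determined by the vertex points is false.
\item For a rigid graph $M$, $\Aut(M)$ is trivial while $\Aut(F(M))$ has size continuum.
\end{enumerate}

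\emph{The swap of $v_n$ and $a_n$.} The three gadget triples $\{v,a,b\},\{v,c,d\},\{a,c,d\}$ are also invariant under exchanging $v$ and $a$. If $n$ is an isolated vertex of $M$, the component $\{v_n,a_n,b_n,c_n,d_n\}$ therefore has an automorphism swapping $v_n$ and $a_n$. Indeed $a_n$ satisfies your pattern with witnesses $v_n,b_n,c_n,d_n$, and all the degrees match because $\deg v_n=2=\deg a_n$. Consequences:
\begin{enumerate}
\item No $L_{\omega_1,\omega}$-formula defines the set of vertex points.
\item Relativizing to your $\varphi_V$ interprets $M$ with every isolated vertex doubled. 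For example, if $\sigma$ says ``there is exactly one isolated vertex'', then $[F(\Mod(\sigma))]_{\cong}\neq\Mod(\tau\wedge\sigma^*)$, so your faithfulness argument as written fails.
\end{enumerate}

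The bare reduction property can be rescued by matching isolated components by hand, but the automorphism claim cannot be rescued without a new gadget. Your fallback, a ``small rigid configuration of five triples on seven points'', is exactly the missing ingredient, and you neither specify nor check it. Note that any triple containing two points that lie in no other triple already yields a transposition.

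\emph{How the paper avoids this.} The paper uses no per-vertex gadgets. It adds just four points:
\begin{itemize}
\item an apex $-1$ lying in every hyperedge, with hyperedges $\{-1,m,n\}$ for $R(m,n)$;
\item hyperedges $\{-1,-2,m\}$ for $m\in\N\cup\{-4\}$, and the hyperedge $\{-1,-3,-4\}$.
\end{itemize}
Each new point is individually definable:
\begin{itemize}
\item $-1$ is the point in every hyperedge;
\item $-2$ is the unique point sharing no hyperedge with exactly one other point;
\item $-3$ is the point sharing no hyperedge with $-2$;
\item $-4$ is the point in a hyperedge with $-1$ and $-3$.
\end{itemize}
So every automorphism fixes these four points, $\N$ is definable as their complement, and $R$ is the link of $-1$. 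Rigidity, $\Aut(F(M))\cong\Aut(M)$, and the relativized translation $\varphi\mapsto\varphi^F$ then come for free. The universe is the fixed set $\{-4,\dots,-1\}\cup\N$, so no dummy points or $M$-dependent enumeration are needed.

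A smaller point: in any repair you must fix one construction. Your dummy version has the full symmetric group of the dummies inside $\Aut(F(M))$. The dummy-free version with an $M$-dependent enumeration is still continuous and is the one to use throughout.
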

\begin{proof}
	It is well-known that the class of countable graphs is faithfully Borel complete.
It suffices to construct a faithful Borel reduction $F\colon \mathrm{Mod}(\Gamma)\to\mathrm{Mod}(\Gamma_3)$.
Let $M\in\mathrm{Mod}(\Gamma)$ and we define $F(M)$ as follows:
	\begin{enumerate}[label=(\itshape\arabic*)]
		\item Based on the graph $M=(\N,R)$, we construct a 3-uniform hypergraph $(F_0(M),E)$ on $\{-4,-3,-2,-1\}\cup\N$, where edges are defined as follows:
			\begin{itemize}
				\item $E(-1,-3,-4)$
				\item $E(-1,m,n)$ for all $m,n\in\N$ with $R(m,n)$
				\item $E(-1,-2,m)$ for every $m\in \N\cup\{-4\}$
			\end{itemize}
        See Figure~\ref{fig:3uh} for the construction.
		\item Define $c\colon \{-4,-3,-2,-1\}\cup\N\to\N$ by $c(m) = m+4$. We get a 3-uniform hypergraph $F(M)$ with universe $\N$ from $F_0(M)$.
	\end{enumerate}
	\begin{figure}[!htbp]
		\centering
		\includegraphics[width=0.80\textwidth]{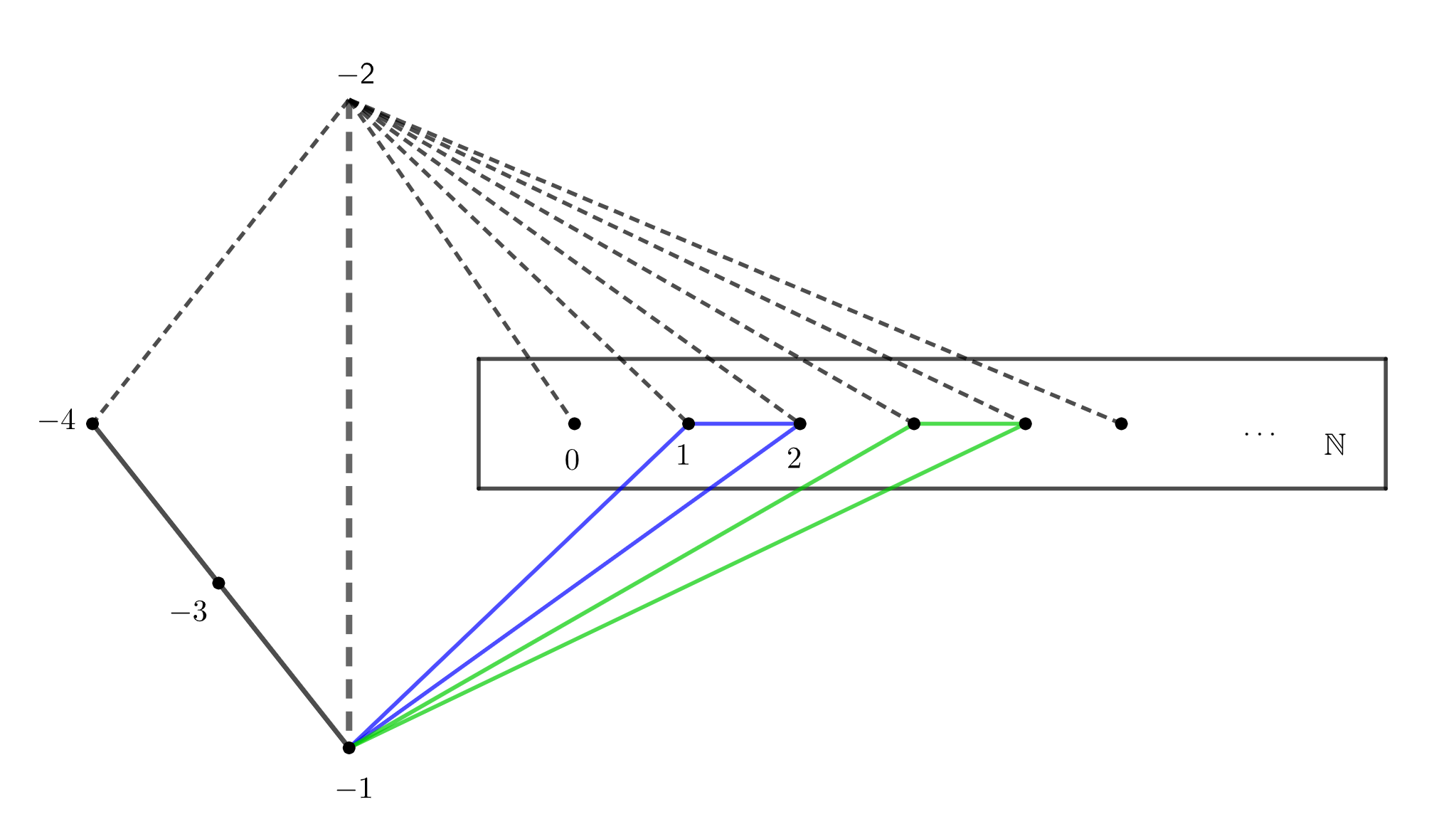}
		\caption{The construction of 3-uniform hypergraphs}
		\label{fig:3uh}
	\end{figure}
		
	Note that each of $\{-1, -2, -3, -4\}$ is definable in $F_0(M)$.
	The vertex $-1$ is the unique vertex that belongs to every edge, which is defined by $$\alpha_1(x) \coloneqq \forall y_1\forall y_2\forall y_3 E(y_1,y_2,y_3)\rightarrow \left(y_1=x\vee y_2=x\vee y_3=x  \right).$$
	The vertex $-2$ is the unique vertex that is not connected to exactly one other vertex, which is defined by
	$\alpha_2(x) \coloneqq \left( \forall y \neg E(x,y,\cdot) \right)^{(\cdot)=2}$.
	The vertex $-3$ is the unique vertex that is not connected to $-2$, which is defined by
	$$\alpha_3(x) \coloneqq \exists y\left(y\neq x\wedge\alpha_2(y)\wedge\forall z\neg E(x,y,z)\right).$$
	The vertex $-4$ is the unique vertex that is connected to $-1$ and $-3$, which is defined by
	$\alpha_4(x) \coloneqq \exists y\exists z \alpha_1(y)\wedge\alpha_3(z)\wedge E(x,y,z)$.
	Define $$\delta(x)\coloneqq\neg\left( \alpha_1(x)\vee\alpha_2(x)\vee\alpha_3(x)\vee\alpha_4(x) \right).$$
Then we have that
$ F_0(M)\models \delta(c)$ if and only if $c\in \N$. Define $\rho(x,y)$ as $\delta(x)\wedge\delta(y)\wedge\left(\exists z\alpha_1(z)\wedge E(x,y,z)\right)$. Then we have that $F_0(M)\models \rho(a,b)$ if and only if $M\models R(a,b)$. Thus, the graph $M$ is definable in the 3-uniform hypergraph $F_0(M)$. Let $\chi_1$ denote the following formula:
	\begin{align*} &\quad\alpha_1(\cdot)^{(\cdot)=1}\wedge\alpha_2(\cdot)^{(\cdot)=1}\wedge\alpha_3(\cdot)^{(\cdot)=1}\wedge\alpha_4(\cdot)^{(\cdot)=1}\\
	&\wedge\bigl( \forall x\forall y\forall z \left( E(x,y,z)\wedge\alpha_1(x)\wedge\delta(y)\right) \rightarrow \left( \delta(z)\vee\alpha_2(z) \right)  \bigr).
	\end{align*}
	Clearly, every countable model of $\Gamma_3\wedge\chi_1$ is a 3-uniform hypergraph that is isomorphic to some $F(M)$.
	
	Furthermore, we can use $L^E_{\omega_1,\omega}$-formulas in $F(M)$ to describe $L^R_{\omega_1,\omega}$-formulas in $M$. Recursively, for an $L^R_{\omega_1,\omega}$-formula $\varphi(\bar{x})$, we define its corresponding $L^E_{\omega_1,\omega}$-formula $\varphi^{F}(\bar{x})$ as follows:
\begin{itemize}
  \item $R(x,y)^{F}$ is $\rho(x,y)$
  \item $(x = y)^{F}$ is $x = y$
  \item $\left(\neg\varphi\right)^{F}(\bar{x})$  is $\neg\left( \varphi^{F}\right)(\bar{x})$
  \item $\left(\bigwedge\limits_{i\in\N}\varphi_i\right)^{F}(\bar{x})$ is $\bigwedge\limits_{i\in\N}\left(\varphi_i^{F}\right)$
  \item $\left(\bigvee\limits_{i\in\N}\varphi_i \right)^{F}(\bar{x})$ is $\bigvee\limits_{i\in\N}\left(\varphi_i^{F}\right)$
  \item $\left(\forall y\varphi(\bar{x},y)\right)^{F}$ is $\forall y\left(\delta(y)\rightarrow \varphi^{F}(\bar{x},y)\right)$
  \item $\left(\exists y\varphi(\bar{x},y)\right)^{F}$ is $\exists y\left(\delta(y)\wedge \varphi^{F}(\bar{x},y)\right)$
\end{itemize}
	
		Next, we will prove that $F\colon\mathrm{Mod}(\Gamma)\to\mathrm{Mod}(\Gamma_3)$ is a faithful Borel reduction.
	\begin{enumerate}[label=(\itshape\alph*)]
		\item Clearly, $F$ is continuous, and thus Borel.
		\item For all $M,N\in\mathrm{Mod}(\Gamma)$, we prove that $M\cong N \iff F(M)\cong F(N)$.
		\begin{enumerate}[label=(\itshape\roman*)]
			\item If $\varepsilon\colon M\to N$ is an isomorphism, then naturally
			\begin{displaymath}
				\bar{\varepsilon}(x) = \left\{
				\begin{array}{ll}
				x & {x\in\{-1,-2,-3,-4\}}\\
				\varepsilon(x) & {x\in\N}\\
				\end{array}
				\right.
			\end{displaymath}
			becomes an isomorphism from $F_0(M)$ to $F_0(N)$, and thus $F(M) \cong F(N)$.
			\item If $F(M) \cong F(N)$, then we have that $F_0(M) \cong F_0(N)$. Since $M$ and $N$ can be defined in $F_0(M)$ and $F_0(N)$ respectively, the restriction of an isomorphism from  $F_0(M)$ to $F_0(N)$ gives an isomorphism from $M$ to $N$.
		\end{enumerate}
		\item $F$ is faithful. Let $\hC = \mathrm{Mod}(\varphi)$ be an invariant Borel class in $\mathrm{Mod}(\Gamma)$, where $\varphi$ is an $L^R_{\omega_1,\omega}$-sentence. It is easy to verify that
		$$[F(\hC)]_{S_\infty} = [F(\mathrm{Mod}(\varphi))]_{S_\infty} = \mathrm{Mod}(\Gamma_3\wedge\varphi^F\wedge\chi_1).$$
		Hence, $[F(\hC)]_{S_\infty}$ is Borel, and thus $F$ is faithful.
		\item Since every added point is determined uniquely and they're fixed pointwise under automorphisms, it is easy to verify that $F$ preserves automorphisms, that is to say, $\Aut(M)\cong \Aut(F(M))$.
	\end{enumerate}
\end{proof}

\section{The class of countable partial Steiner triple systems is faithfully Borel complete}
Let $\Sigma_0$ be the theory of partial Steiner triple systems. In this section, we will prove the following theorem.

\begin{theorem}\label{pSTS}
The class of countable partial Steiner triple systems is faithfully Borel complete.
Moreover, there is a faithful Borel reduction $G\colon\Mod(\Gamma_3)\to\Mod(\Sigma_0)$ such that for every countable 3-uniform hypergraph $M$ with universe $\N$, we have $\Aut(G(M))\cong\Aut(M)$.
\end{theorem}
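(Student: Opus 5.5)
We build a gadget-replacement map $G$ following the pattern of Proposition~\ref{hg}: encode each hyperedge of $M$ by a small configuration of blocks in which no two blocks share a pair. Then we check that $M$ is uniformly $L_{\omega_1,\omega}$-interpretable in $G(M)$ and that every automorphism of $G(M)$ comes from an automorphism of $M$.

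\textbf{Construction.} Given $M=(\N,E)$, the universe of $G_0(M)$ consists of four kinds of points:
\begin{itemize}
\item the original vertices $v\in\N$;
\item for each unordered edge $e=\{a,b,c\}$ of $M$, new points $e_a,e_b,e_c$ and a new point $e^*$;
\item for each pair $\{a,b\}$ that is \emph{not} contained in an edge, nothing new;
\item a fixed finite rigid ``marker'' gadget.
\end{itemize}
The blocks for $e$ are $\{a,e_a,e^*\}$, $\{b,e_b,e^*\}$, $\{c,e_c,e^*\}$ and $\{e_a,e_b,e_c\}$. Any two of these blocks meet in at most one point, and blocks of different edges share only original vertices, and at most one each. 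So the result is a partial STS regardless of how many edges pass through a pair $\{a,b\}$; this is exactly what fails for the naive map sending hyperedges to blocks.

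\textbf{Definability.} To separate the sorts we use degrees (number of blocks through a point):
\begin{itemize}
\item $e^*$ has degree $3$;
\item $e_a$ has degree $2$;
\item an original vertex $v$ has degree equal to the number of edges containing it, which could also be $2$ or $3$, or even $0$.
\end{itemize}
To make $\N$ definable, attach to each $v\in\N$ a private ``tag'' from the marker gadget: a block $\{v,t_v,t'_v\}$ together with an infinite configuration, so that the original vertices are exactly the points lying on a block whose other two points have degree $1$. Alternatively, attach a single block with a designated point, as in the $-1$ trick of Proposition~\ref{hg}. Points $t_v,t'_v$ of degree $1$ do not occur among gadget points, so $\delta(x)$, ``$x$ is original'', is first-order definable. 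Then $E(a,b,c)$ is defined by the existence of $x,y,z,w$ forming the four-block configuration above with $x,y,z$ non-original. We must also check that no spurious such configuration arises. Any configuration of four blocks with the pattern (three blocks through a common point $w$, plus a triangle block) among the non-original points occurs only as $e$-gadgets, since tag blocks are isolated and gadget points have bounded degree. Relabelling via a bijection with $\N$ gives a continuous $G$.

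\textbf{Reduction, faithfulness, automorphisms.} An isomorphism $M\cong N$ extends canonically to $G_0(M)\cong G_0(N)$. Conversely, since $\delta$ and the edge formula are definable, any isomorphism restricts to $\N$ and yields $M\cong N$. For faithfulness, define the translation $\varphi\mapsto\varphi^G$ exactly as in Proposition~\ref{hg}. Let $\chi_2$ be an $L_{\omega_1,\omega}$ sentence axiomatizing ``every point is original, a tag point, or belongs to exactly one edge-gadget, with the prescribed incidences''. Then $[G(\mathrm{Mod}(\varphi))]_{S_\infty}=\mathrm{Mod}(\Sigma_0\wedge\chi_2\wedge\varphi^G)$. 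For automorphisms, the restriction map $\Aut(G(M))\to\Aut(M)$ is a homomorphism and surjective by canonical extension. It is injective because an automorphism fixing $\N$ pointwise fixes every gadget: $e^*$ is the unique common point of the blocks through $e_a,e_b,e_c$, and $e_a$ is determined by $a$ and $e^*$. Tag points also need care: the two tag points of $v$ could be swapped, so the tag must be rigid. Use two blocks $\{v,t_v,s_v\}$ and $\{t_v,u_v,u'_v\}$ so that $t_v$ and $s_v$ have different degrees. By the same reasoning, the vertex pair $\{u_v,u'_v\}$ still allows a swap, so terminate the tag by linking into a rigid global finite configuration, or use degrees $1,2,3$ asymmetrically.

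\textbf{Main obstacle.} The main obstacle is this rigidity-plus-definability bookkeeping. The tag must be asymmetric and unmistakable from gadget configurations, while keeping the partial-STS condition, and $\chi_2$ must pin down the isomorphism type of every countable model exactly.
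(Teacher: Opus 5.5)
Your edge gadget is sound: every block contains at most one original vertex, so no pair is repeated. But as written the proposal does not prove the ``Moreover'' clause, because the construction is never pinned down. The one tag you fully specify is $\{v,t_v,t'_v\}$ with $t_v,t'_v$ of degree $1$. It has the transposition $(t_v\;t'_v)$ as an automorphism fixing every other point. So the kernel of restriction $\Aut(G(M))\to\Aut(M)$ contains $\prod_{v\in\N}\mathbb{Z}/2$, and for rigid $M$ the two groups are not isomorphic. Your second variant fails the same way at $\{u_v,u'_v\}$; indeed any finite tree-like tag hanging off $v$ ends in a block with two degree-$1$ points. Your other suggestions are not carried out.

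The missing observation is that no tag is needed: the configuration, not the degree, separates the sorts. Let $\gamma(w)$ say that $w$ lies on exactly three blocks and some block not through $w$ meets all three.
\begin{itemize}
\item In $G_0(M)$, $\gamma$ holds exactly for the points $e^*$. The blocks through an original $v$ are the legs $\{v,e_v,e^*\}$ for $e\ni v$, and no block contains new points of two different gadgets; moreover $e_a$ has degree $2$.
\item The points on the transversal block of a $\gamma$-point are exactly the $e_a$'s.
\item All remaining points form exactly $\N$. Vertices lying in no edge become isolated points, which a partial STS allows.
\end{itemize}
Then $E(a,b,c)$ is definable as you say. The gadget over $\{a,b,c\}$ is unique and rigid over $a,b,c$: $e^*$ is the only $\gamma$-point adjacent to all three, and $e_a$ is the third point of the block through $a$ and $e^*$. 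So restriction is injective, and canonical extension gives surjectivity.

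The rest is bookkeeping. $\chi_2$ must also require infinitely many original points, and at most one $\gamma$-point over any three of them. Since the universe of $G_0(M)$ depends on $E^M$, the relabelling onto $\N$ must be chosen uniformly in $M$. For example, list vertex $k$, then the four gadget points of the $k$-th triple if it is an edge; this keeps $G$ continuous.

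This is a genuinely different route from the paper's. The paper uses an $M$-independent universe:
\begin{itemize}
\item columns $\N$;
\item a row for every triple $n=f(\{x_1,x_2,x_3\})$, joined to each column $m$ by $\{m,(-1,n),(m,n)\}$;
\item marker blocks tying row $n$ to its triple;
\item a block inside row $n$ only when the triple is an edge.
\end{itemize}
Sorts are separated by degrees ($1$, at most $3$, $\omega$) in $L_{\omega_1,\omega}$. This makes the relabelling one fixed bijection, at the price of a much heavier $\chi_2$. Your local gadget is smaller, and its interpretation of $M$ is even first-order.

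On automorphisms your tagless gadget is actually better off than the paper. The paper's blocks $\{(-1,m),(-2,m),(-3,m)\}$ and $\{(P_i(n),n),(-2i-2,n),(-2i-3,n)\}$ each contain two points of degree $1$. So precisely the swap you worried about is a nontrivial automorphism of $G_0(M)$ fixing $\N$ pointwise. Hence the ``unique extension'' step in part (d) of the paper's proof fails, and for rigid $M$ the stated isomorphism is false for that construction.
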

\begin{proof}
	By Proposition~\ref{hg}, the class of countable 3-uniform hypergraphs is Borel complete, we only need construct a faithful Borel reduction $G\colon\mathrm{Mod}(\Gamma_3)\to\mathrm{Mod}(\Sigma_0)$. Let $M\in\mathrm{Mod}(\Gamma_3)$ be a 3-uniform hypergraph. We will define $G(M)$ as follows:
	\begin{enumerate}[label=(\itshape\arabic*)]
		\item Fix a bijection $f\colon [\N]^3\to\N$.
	Then, for $i = 1,2,3$ we define $P_i\colon\N\to\N$ by $P_i(y) = x_i$, where $f(\{x_1,x_2,x_3\}) = y$ and $x_1>x_2>x_3$, and thus $P_1(y)>P_2(y)>P_3(y)$.
    Next, on $\N\cup \bigl( (\{ -9,-8,\cdots,-1 \} \cup\N) \times \N\bigr)$, we construct a partial Steiner triple system, denoted by $G_0(M)$, as follows:
		\begin{itemize}
			\item $E\left( (-1,m),(-2,m),(-3,m)\right)$  for all $m\in \N$
			\item $E\left( m,(-1,n), (m,n) \right)$  for all $m, n\in \N$
			\item $E\left( (P_1(n),n),(-4,n), (-5,n) \right)$ for all  $n\in \N$
            \item $E\left( (P_2(n),n),(-6,n), (-7,n) \right)$ for all  $n\in \N$
            \item $E\left( (P_3(n),n),(-8,n), (-9,n) \right)$ for all  $n\in \N$		
            \item $E((m,k),(n,k),(l,k))$ if $M \models E(m,n,l)$ and $k = f(\{m,n,l\})$
		\end{itemize}
See Figure~\ref{fig:psts} for the construction.
		\item Fix a bijection $g$ from $\N\cup \bigl( (\{-9,-8,\cdots,-1\} \cup\N) \times \N\bigr)$ to $\N$.
		Then $g$ turns $G_0(M)$ into a partial Steiner triple system on $\N$, denoted by $G(M)$.
	\end{enumerate}

The idea of the construction $G_0(M)$ is as follows: The set $\N$ consists of vertices in $M$, but edges in $M$ might not satisfy conditions for blocks. We assign different edges in $M$ to blocks in different rows in $\N^2$. Also, the elements of $\N$ correspond columns in $\N^2$. The set $\{-1\}\times\N$ is used to code different rows in $\N^2$, and the set $\{-2,-3\}\times\N$ is used to distinguish $\N$ and $\{-1\}\times\N$. The set $\{-4,-5,\cdots,-9\}\times\N$ is used to code three distinct points in the same row in $\N^2$, which might be in a block depending on whether there is an edge between the corresponding vertices in $M$. Note that the degree of each vertex in $\N$ and $\{-1\}\times\N$ is $\omega$, the degree of each vertex in $\{-2,-3\}\times\N$ and $\{-4,-5,\cdots,-9\}\times\N$ is 1, and the degree of each vertex in $\N^2$ is 1, 2, or 3.
	\begin{figure}[!htbp]
		\centering
		\includegraphics[width=0.80\textwidth]{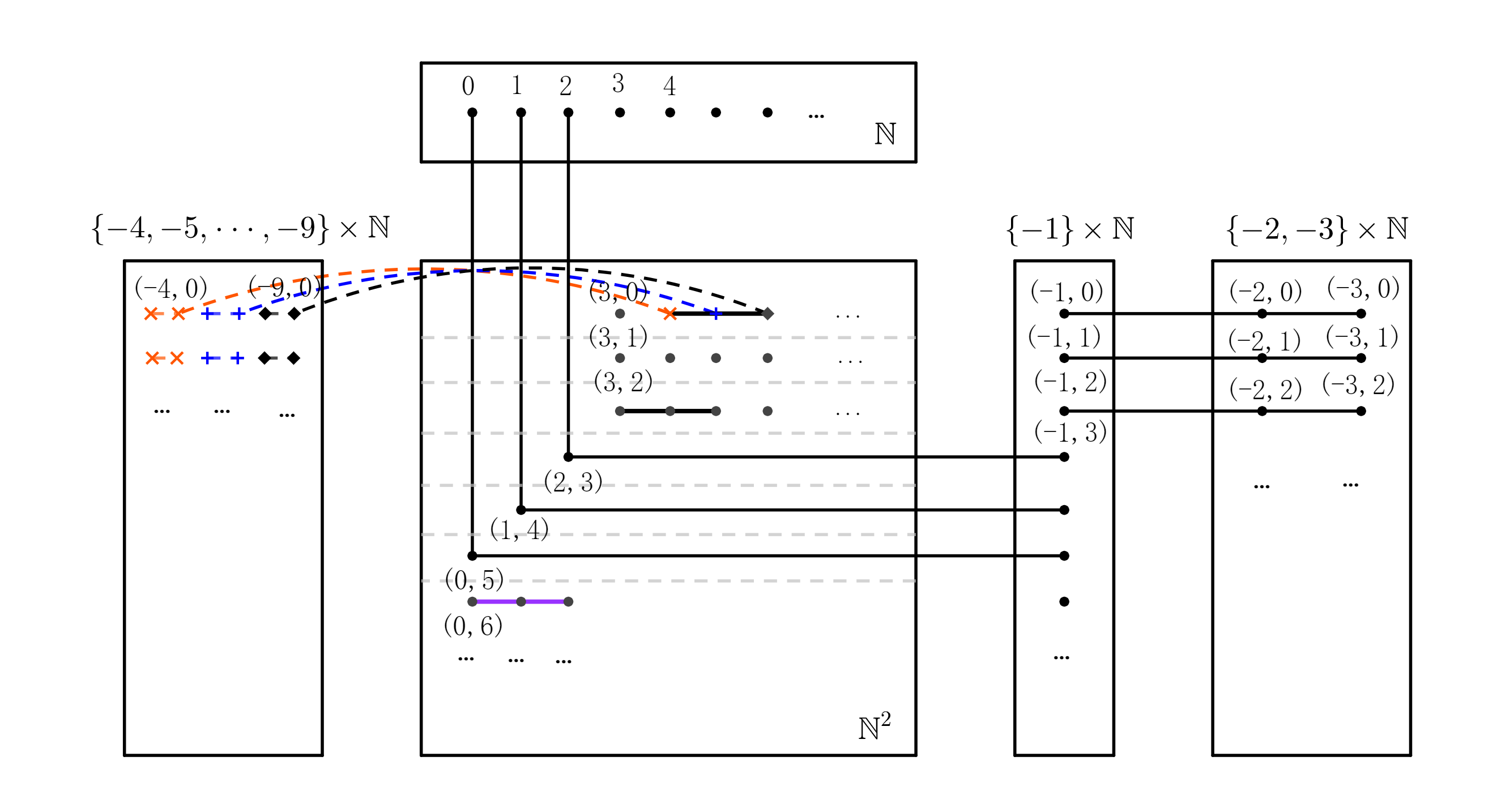}
		\caption{The construction of partial Steiner triple systems}
        \label{fig:psts}
	\end{figure}
		
	Note that $\left\lbrace -4, -5, -6, -7, -8, -9 \right\rbrace \times \N$, $\left\lbrace (P_i(n),n) \mid n\in\N, i=1,2,3 \right\rbrace$,  $\left\lbrace -2, -3 \right\rbrace \times \N$, $\left\lbrace -1 \right\rbrace \times \N$, $\N$, $\N^2$ are all definable in $G_0(M)$.
	The set $\left\lbrace -4, -5, -6, -7, -8, -9 \right\rbrace \times \N$ consists of degree 1 points that are connected to another degree 1 point and a point with degree $\leq 3$, which is defined by
	$$\tau_0(x) \coloneqq E(x,\cdot_1,\cdot_2)^{\{\cdot^2\}= 1} \wedge \left( \exists y\exists z E(x,y,z)\wedge E(y,\cdot_1,\cdot_2)^{\{\cdot^2\}= 1}\wedge E(z,\cdot_1,\cdot_2)^{\{\cdot^2\}\leqslant 3}\right).$$
	The set $\left\lbrace (P_i(n),n) \mid n\in\N, i=1,2,3 \right\rbrace$ consists of degree $>1$ points that are connected to two points in $\left\lbrace -4, -5, -6, -7, -8, -9 \right\rbrace \times \N$, which is defined by
	$$\tau(x) \coloneqq E(x,\cdot_1,\cdot_2)^{\{\cdot^2\}> 1} \wedge \left( \exists y\exists z \tau_0(y)\wedge\tau_0(z)\wedge E(x,y,z)  \right).$$
	The set $\left\lbrace -2, -3 \right\rbrace \times \N$ consists of degree 1 points that are connected to a degree 1 point and a degree $\omega$ point, which is defined by an $L^E_{\omega_1, \omega}$ formula:
	$$\theta_0(x) \coloneqq E(x,\cdot_1,\cdot_2)^{\{\cdot^2\}= 1} \wedge \left( \exists y\exists z E(x,y,z)\wedge E(y,\cdot_1,\cdot_2)^{\{\cdot^2\}= 1}\wedge E(z,\cdot_1,\cdot_2)^{\{\cdot^2\}= \omega} \right).$$
	The set $\left\lbrace -1 \right\rbrace \times \N$ consists of degree $\omega$ points that are connected to $\left\lbrace -2, -3 \right\rbrace \times \N$, which is defined by an $L^E_{\omega_1, \omega}$ formula:
	$$\theta(x) \coloneqq E(x,\cdot_1,\cdot_2)^{\{\cdot^2\}= \omega} \wedge \left( \exists y\exists z\theta_0(y)\wedge\theta_0(z)\wedge E(x,y,z) \right).$$
	The set $\N$ consists of the remaining degree $\omega$ points, which is defined by an $L^E_{\omega_1, \omega}$ formula:
	$$\delta(x) \coloneqq E(x,\cdot_1,\cdot_2)^{\{\cdot^2\}= \omega}\wedge\neg\theta(x).$$
	The set $\N^2$ consists of all remaining points, which is defined by
	$$\eta(x) \coloneqq \neg\left( \tau_0(x)\vee\theta_0(x)\vee\theta(x)\vee\delta(x) \right).$$
	Define $\rho(x,y,z)$ as
	\begin{align*}
	\delta(x)\wedge\delta(y)\wedge\delta(z)\wedge \bigl(&\exists w\exists x_0\exists y_0\exists z_0 \theta(w)\wedge\eta(x_0)\wedge\eta(y_0)\wedge\eta(z_0)\\
&\wedge E(x,w,x_0)\wedge E(y,w,y_0)\wedge E(z,w,z_0)\wedge E(x_0,y_0,z_0)\bigr).
	\end{align*}
	Then, it is easy to verify that $G_0(M)\models \rho(a,b,c)$ if and only if $M\models E(a,b,c)$, and thus, we have that the 3-uniform hypergraph $M$ is definable in the partial Steiner triple system $G_0(M)$.
	Also, we can define $G(M)$ by the following $L^E_{\omega_1, \omega}$-formula:
	\begin{align*}
	\chi_2 \coloneqq& \tau_0(\cdot)^{(\cdot)=\omega}\wedge\tau(\cdot)^{(\cdot)=\omega}\wedge\theta_0(\cdot)^{(\cdot)=\omega}\wedge\theta(\cdot)^{(\cdot)=\omega}\wedge\delta(\cdot)^{(\cdot)=\omega}\wedge\eta(\cdot)^{(\cdot)=\omega}\\
	&\wedge\bigl(\forall x\tau(x)\rightarrow\eta(x)\bigr)\\
    &\wedge\left( \forall x\theta_0(x) \rightarrow \left( \exists y E(x,y,\cdot)\wedge\theta(\cdot)\right)^{(\cdot)=1}  \right)\\
	&\wedge\left( \forall x\theta(x) \rightarrow \left( \exists y E(x,y,\cdot)\wedge\theta_0(\cdot)\right)^{(\cdot)=2}  \right)\\
	&\wedge\left( \forall x\tau_0(x) \rightarrow \left( \exists y E(x,y,\cdot)\wedge\tau(\cdot)\right)^{(\cdot)=1}  \right)\\
	&\wedge\left( \forall x\tau(x) \rightarrow \left( \exists y E(x,y,\cdot)\wedge\tau_0(\cdot)\right)^{(\cdot)=2}  \right)\\
	&\wedge\left( \forall x\eta(x) \rightarrow \left( \theta(\cdot_1)\wedge\delta(\cdot_2)\wedge E(x,\cdot_1,\cdot_2) \right)^{(\cdot^2)=1}  \right)\\
	&\wedge\left( \forall x\forall y \theta(x)\wedge\delta(y) \rightarrow \left( \eta(\cdot)\wedge E(x,y,\cdot) \right)^{(\cdot)=1}  \right)\\
	&\wedge\bigl( \forall x\forall y\forall z \delta(x)\wedge E(x,y,z) \rightarrow \left( \theta(y)\wedge\eta(z)\right) \vee\left( \eta(y)\wedge\theta(z)\right)  \bigr)\\
	&\wedge\bigl( \forall x\forall y\forall z (\theta(x)\wedge E(x,y,z)\wedge\neg\theta_0(y)) \rightarrow \left( \delta(y)\wedge\eta(z)\right) \vee\left( \eta(y)\wedge\delta(z)\right) \bigr)\\
	&\wedge\left( \forall x \tau(x) \rightarrow \left( E(x,\cdot_1,\cdot_2)^{\{\cdot^2\}= 2} \vee E(x,\cdot_1,\cdot_2)^{\{\cdot^2\}= 3} \right)\right) \\
	&\wedge\left( \forall x \neg\tau(x)\wedge\eta(x) \rightarrow   E(x,\cdot_1,\cdot_2)^{\{\cdot^2\}= 1} \right) \\
	&\wedge\left( \forall x \theta(x) \rightarrow \left( \exists y E(x,y,\cdot) \wedge \tau(\cdot) \right)^{(\cdot)=3} \right) \\
	&\wedge\left( \forall x_1\forall x_2\forall x_3\; \bigwedge_{i=1}^3\tau(x_i)\wedge E(x_1,x_2,x_3) \rightarrow \left( \exists y\exists z_1\exists z_2\exists z_3 \theta(y)\wedge\bigwedge_{i=1}^3 E(x_i,y,z_i) \right)\right)  \\
	&\wedge\left( \forall x_1\forall x_2\forall x_3 \bigwedge_{i=1}^3\delta(x_i) \rightarrow \left( \exists z_1\exists z_2\exists z_3 \theta(\cdot)\wedge \bigwedge_{i=1}^3\tau(z_i)\wedge\bigwedge_{i=1}^3 E(x_i,\cdot,z_i) \right)^{(\cdot)=1} \right).
	\end{align*}
	Clearly, every countable model of $\Sigma_0\wedge\chi_2$ is a countable partial Steiner triple system that is isomorphic to some $G(M)$.
	
	Then we can use $L^E_{\omega_1, \omega}$-formulas in $G(M)$ to describe $L^E_{\omega_1, \omega}$-formulas in $M$. Recursively, for an $L^E_{\omega_1, \omega}$-formula $\varphi(\bar{x})$, we define its corresponding $L^E_{\omega_1, \omega}$-formula $\varphi^G(\bar{x})$ as what we did for $\varphi^F(\bar{x})$ in the proof of Proposition \ref{hg}.

	Next, we will prove that $G\colon\mathrm{Mod}(\Gamma_3)\to\mathrm{Mod}(\Sigma_0)$ is a faithful Borel reduction.
	\begin{enumerate}[label=(\itshape\alph*)]
		\item Clearly, $G$ is Borel.
		\item For all $M,N\in\mathrm{Mod}(\Gamma_3)$, we prove that $M\cong N \iff G(M)\cong G(N)$.
		\begin{enumerate}[label=(\itshape\roman*)]
			\item If $\varepsilon\colon M\to N$ is an isomorphism, then
			\begin{displaymath}
			\bar{\varepsilon}(x) = \left\{
			\begin{array}{ll}
			\varepsilon(m)  & {x = m\in\N}\\
			(m,f(\varepsilon(a),\varepsilon(b),\varepsilon(c)))  & {x = (m,f(a,b,c))\in \{-1,-2,-3\}\times \N}\\
			(\varepsilon(m),f(\varepsilon(a),\varepsilon(b),\varepsilon(c)))  & {x = (m,f(a,b,c))\in \N^2}\\
			\end{array}
			\right.
			\end{displaymath}
			becomes a partial isomorphism from $G_0(M)$ to $G_0(N)$. Considering edges in $\tau_0(x)$ and $\tau(x)$, we can extend $\bar{\varepsilon}$ uniquely to an isomorphism from $G_0(M)$ to $G_0(N)$, and thus $G(M) \cong G(N)$.
			\item If $G(M) \cong G(N)$, then we have that $G_0(M) \cong G_0(N)$. Since $M$ and $N$ are defined by $\delta$ in $G_0(M)$ and $G_0(N)$ respectively, the restriction of an isomorphism from $G_0(M)$ to $G_0(N)$ gives an isomorphism from $M$ to $N$.
		\end{enumerate}
		\item $G$ is faithful. Let $\hC = \mathrm{Mod}(\varphi)$ be an invariant Borel class in $\mathrm{Mod}(\Gamma_3)$, where $\varphi$ is an $L^E_{\omega_1,\omega}$-sentence. It is easy to verify that
		$$[G(\hC)]_{S_\infty} = [G(\mathrm{Mod}(\varphi))]_{S_\infty} = \mathrm{Mod}(\Sigma_0\wedge\varphi^G\wedge\chi_2).$$
		Hence, $[G(\hC)]_{S_\infty}$ is Borel, and thus $G$ is faithful.
		\item In the construction of $G_0$, vertices in $M$ correspond columns in $G_0(M)$, and edges in $M$ correspond rows in $G_0(M)$.
Say $M=(\N,E)$ and take $\sigma\in\Aut(M)$. For distinct $m,n,l\in\N$, define $\Phi(m)=\sigma(m)$, and $$\Phi(i,f(\{m,n,l\}))=(i,f(\{\sigma(m),\sigma(n),\sigma(l)\})),$$ where $i\in\{-9,-8,\cdots,-1\}\cup\N$. Based on the construction of $G_0(M)$, it is easy to verify that $\Phi\in\Aut(G_0(M))$. Thus, every automorphism of the hypergraph $M$ can be uniquely extended to an automorphism of the partial Steiner triple system $G_0(M)$. On the other hand, the restriction of every automorphism of $G_0(M)$ on the hypergraph $\delta(G_0(M))$ is also an automorphism. Hence, $G_0$ preserves automorphisms, that is to say, $$\Aut(M) \cong \Aut(G_0(M))\cong\Aut(G(M)).$$
	\end{enumerate}
\end{proof}

\section{The class of countable Steiner triple systems is faithfully Borel complete}
Let $\Sigma$ be the theory of Steiner triple systems. In this section, we will prove the Main Theorem. \\
\emph{Proof of Main Theorem.}
In Theorem~\ref{pSTS}, we build a faithful Borel reduction $G\colon\mathrm{Mod}(\Gamma_3)\to\mathrm{Mod}(\Sigma_0)$. Actually, we proved that the class of $\mathrm{Mod}(\Sigma_0\wedge\chi_2)$ is faithfully Borel complete. Thus, to prove that the class of countable Steiner triple systems is Borel complete, we only need construct a faithful Borel reduction $H\colon\mathrm{Mod}(\Sigma_0\wedge\chi_2)\to\mathrm{Mod}(\Sigma)$.
Let $M\in\mathrm{Mod}(\Sigma_0\wedge\chi_2)$ be a countable partial Steiner triple system of the form shown in Figure~\ref{fig:psts}. We will define $H(M)$ in the following 4 steps.

\textbf{Step 1:} First, we introduce the simplest nontrivial Steiner triple system whose automorphism group is $S_3$. It turns out to be a Steiner triple system with 15 points and 35 blocks. In \cite[II.1.2, 1.28]{CD}, all 80 non isomorphic Steiner triple systems of order 15 are constructed. Let $T$ denote the $\# 43$ in the list. Then by \cite[II.1.2, 1.29]{CD}, $T$ contains no Fano planes and thus no nontrivial subsystems, and the automorphism group of $T$ is of order 6. To be precise, $T=(V_T,E_T)$, where $V_T=\{1,2,\cdots,14,15\}$ and
\begin{align*}
\qquad E_T=\big\{&\{1,2,3\}, \{1,4,5\}, \{1,6,7\}, \{1,8,9\}, \{1,10,11\}, \{1,12,13\}, \{1,14,15\}, \\
&\{2,4,6\}, \{2,5,7\}, \{2,8,10\}, \{2,9,11\}, \{2,12,14\},\{2,13,15\}, \{3,4,8\}, \\
&\{3,5,9\}, \{3,6,12\}, \{3,7,14\}, \{3,10,15\}, \{3,11,13\}, \{4,7,10\}, \{4,9,15\}, \\
&\{4,11,12\}, \{4,13,14\}, \{5,6,11\}, \{5,8,13\}, \{5,10,14\}, \{5,12,15\}, \\
&\{6,8,15\}, \{6,9,14\}, \{6,10,13\}, \{7,8,12\}, \{7,9,13\},\{7,11,15\}, \\
&\{8,11,14\}, \{9,10,12\}\big\}.
\end{align*}
A computation in Magma shows that 
\begin{align*}
\Aut(T)=&\big\{\mathrm{Id},\\
    &\alpha=(1, 8)(2, 13)(3, 5)(6, 14)(7, 11)(10, 12),\\
    &\beta\alpha=(1, 9)(2, 15)(3, 4)(6, 10)(7, 12)(11, 14),\\
    &\beta^2\alpha=(4, 5)(6, 7)(8, 9)(10, 11)(12, 14)(13, 15),\\
    &\beta=(1, 8, 9)(2, 13, 15)(3, 5, 4)(6, 11, 12)(7, 14, 10),\\
    &\beta^2=(1, 9, 8)(2, 15, 13)(3, 4, 5)(6, 12, 11)(7, 10, 14)
\big\}\cong S_3 .
\end{align*}
Note that in $\Aut(T)$, order 2 automorphisms have fixed points, while order 3 automorphisms have no fixed points. Precisely, $\{4,9,15\}$ is fixed by $\alpha$ pointwise, $\{5,8,13\}$ is fixed by $\beta\alpha$ pointwise, and $\{1,2,3\}$ is fixed by $\beta^2\alpha$ pointwise. Also, every iteration orbit of order 3 automorphisms is of size 3, and there are 5 iteration orbits. Among those 5 iteration orbits, there are only two orbits are blocks which are $\{1, 8, 9\}$ and $\{2, 13, 15\}$. Among the non-block iteration orbits, $\{3,4,5\}$ is the unique one where every point is fixed by some automorphism, and $3*4=8$, $3*5=9$, $4*5=1$. From the above discussion, we can identify the block $\{2,13,15\}$ from the Steiner triple system $T$, but we are unable to identify the point $\{2\}$ from the block $\{2,13,15\}$. Indeed, given $\sigma\in\Aut(T)$, the restriction of $\sigma$ on $\{2,13,15\}$ is a permutation. Also, every permutation of $\{2,13,15\}$ can be extended to an automorphism of $T$.

\textbf{Step 2:} For each block $\{x,y,z\}$ of $M$, we attach a Steiner triple system $T$ such that $\{x,y,z\}$ corresponds $\{2,13,15\}$. From the construction of $\{2,13,15\}$ in $T$, each permutation works. Then we get a partial Steiner triple system $M^T$.

\textbf{Step 3:} Let $N$ denote the Steiner triple system freely generated by $M^T$.
		

\textbf{Step 4:} After enumeration, we make $N$ a Steiner triple system on $\N$, denoted by $H(M)$.

Note that $M\in\mathrm{Mod}(\Sigma_0\wedge\chi_2)$ is isomorphic to $G(A)$ for some 3-uniform hypergraph $A$, and based on the construction of $G(A)$, we have that $M$ contains no subsystem isomorphic to $T$. Also, free generation does not generate any new system isomorphic to $T$. Hence, $M$ and $M^T$ can be identified in $N$ using $T$. 

Since the Steiner triple system $T$ is finite, we let $\tau(x_1,\cdots,x_{15})$ denote the $L^E$-formula defining $T$ in a way such that $x_i$ corresponds the point $\{i\}$ for each $1\leq i \leq 15$. 
Let $\xi(x_1,\cdots, x_{15})$ denote $\bigvee\limits_{\pi\in S_{15}}\tau\,(x_{\pi(1)},\cdots, x_{\pi(15)})$. 
Then, for every partial Steiner triple system $S$,
we have that $S\models \xi(c_1,c_2,\cdots,c_{15})$ if and only if $S|_{\left\lbrace c_1,c_2,\cdots,c_{15}\right\rbrace } \cong T$, where $c_1,c_2,\cdots,c_{15}\in S$.
Let $\xi_3(x_1, x_2, x_3)$ denote $\exists x_{4}\cdots\exists x_{15} \xi(x_1, \cdots, x_{15})$. Then $\xi_3$ expresses that the 3 points $\{x_1,x_2,x_3\}$ are in a system isomorphic to $T$. 
Define $\tau_{2,13,15}(x,y,z)$ as
$$\exists x_1\exists x_3\cdots\exists x_{12}\exists x_{14}\tau(x_1,x,x_3,\cdots,x_{12},y,x_{14},z).$$
Then, for every partial Steiner triple system $S$ and for every $a,b,c\in S$,
we have that $S\models \tau_{2,13,15}(a,b,c)$ if and only if $S$ contains a subsystem $T$ such that $\{a,b,c\}$ correspond $\{2,13,15\}$ in $T$.
	Note that blocks in $M^T$ are exactly blocks in subsystems isomorphic to $T$ in $N$. Define $\widetilde{\rho}(x,y,z)$ as $E(x,y,z)\wedge\xi_3(x,y,z)$.
	Then, $N\models \widetilde{\rho}(a, b, c)$ if and only if $M^T\models E(a, b, c)$. Since all points in $M^T$ are in some blocks, $M^T$ can be determined by blocks. Define $\widetilde{\delta}(x)$ as $\exists y\exists z\widetilde{\rho}(x,y,z)$. Then, $N\models \widetilde{\delta}(c)$ if and only if  $c\in M^T$. Consider systems $T$ that are contained in $N$. Note that points $\{2,13,15\}$ in $T$ correspond points in $M$. Define $\delta(x)$ as $\exists y\exists z \tau_{2,13,15}(x,y,z)$. Then, $N\models \delta(c)$ if and only if $c\in M$.
	Note that blocks in $M$ correspond blocks $\{2,13,15\}$ in subsystems $T$ of $N$.
	Thus, $N\models \tau_{2,13,15}(a, b, c)$ if and only if $M\models E(a, b, c)$. Then, $\left(\delta(\cdot), \tau_{2,13,15}(\cdot_1,\cdot_2,\cdot_3) \right)$ interprets $\left(M, E(\cdot_1,\cdot_2,\cdot_3)\right)$ in $N$.

	Further, we can use $L^E_{\omega_1,\omega}$-formulas in $H(M)$ to describe $L^E_{\omega_1,\omega}$-formulas in $M$. Recursively, for an $L^E_{\omega_1,\omega}$-formula $\varphi(\bar{x})$, we define its corresponding $L^E_{\omega_1,\omega}$-formula $\varphi^H(\bar{x})$ as what we did for $\varphi^F(\bar{x})$ in the proof of Proposition \ref{hg}.	
To express freely generated $N$, define $L^E_{\omega_1,\omega}$-formula as follows:
	\begin{align*}
	\lambda_0(x) \coloneqq & \widetilde{\delta}(x)\\
	\lambda_{n+1}(x) \coloneqq & \exists y\exists z\lambda_n(y)\wedge\lambda_n(z)\wedge E(x,y,z), \forall n\in\N\\
	\lambda'_{n+1}(x) \coloneqq & \neg\lambda_{n}(x)\wedge\lambda_{n+1}(x), \forall n\in\N\\
	\mu_{n}(x,y,z) \coloneqq &  E(x,y,z)\wedge\lambda_{n}(x)\wedge\lambda_{n}(y)\wedge\lambda_{n}(z), \forall n\in\N\\
	\mu'_{n+1}(x,y,z) \coloneqq & \neg\mu_{n}(x,y,z)\wedge\mu_{n+1}(x,y,z), \forall n\in\N.
	\end{align*}
Roughly speaking, $\lambda_0$ gives $M^T$, which is considered as Level 0, and $\lambda_0\subseteq\lambda_1\subseteq\cdots$. For each $n\geq 1$, $\lambda_n$ is considered as Level $n$. 
Further, define
$$\widetilde{\tau}(u,v,w,y_1,\cdots,y_{12})\coloneqq  \tau(y_1,u,y_2,\cdots,y_{11},v,y_{12},w),$$
and for each $1\leq n\leq 12$, define
$$\widetilde{\tau_n}(u,v,w,y)\coloneqq \exists y_1\cdots\exists y_{n-1}\exists y_{n+1}\cdots\exists y_{12}\widetilde{\tau}(u,v,w,y_1,\cdots,y_{n-1},y,y_{n+1},\cdots,y_{12}).$$

Now, we can define $H(M)$ by the following $L^E_{\omega_1,\omega}$-formula:
	\begin{align*}
	\chi_3 \coloneqq& \forall x\bigl(\neg\delta(x)\wedge\widetilde{\delta}(x)\bigr)\rightarrow \bigvee\limits_{n=1}^{12}\left( \widetilde{\tau_n}(\cdot_1,\cdot_2,\cdot_3,x)\right)^{\{\cdot^3\} = 1 }\\
    &\wedge\forall x\forall y\forall z\tau_{2,13,15}(x,y,z)\rightarrow \left(\bigvee\limits_{n=1}^{12}\widetilde{\tau_n}(x,y,z,\cdot)\right)^{(\cdot)=12}\\
	&\wedge\forall x\forall y\forall z\mu_0(x,y,z)\leftrightarrow \xi_3(x,y,z)\\
	&\wedge\bigwedge\limits_{i\in\omega}\bigl( \forall x\forall y\lambda_i(x)\wedge\lambda_i(y) \rightarrow \exists z\mu_{i+1}(x,y,z) \bigr)\\
	&\wedge\bigwedge\limits_{i\in\omega}\left( \forall x\lambda^{\prime}_{i+1}(x) \rightarrow \left( \mu_{i+1}(x,\cdot_1,\cdot_2)\wedge\lambda_{i}(\cdot_1)\wedge\lambda_{i}(\cdot_2)\right) ^{\{\cdot^2\}=1}   \right)\\	
	&\wedge\bigwedge\limits_{i\in\omega}\left( \forall x\lambda^{\prime}_{i+1}(x) \rightarrow  \left( \exists y \mu_{i+1}(x,y,\cdot)\wedge\lambda_{i+1}(\cdot) \right)^{(\cdot)=2}  \right)\\
	&\wedge \forall x \bigvee\limits_{i\in\omega} \lambda_i(x).
	\end{align*}

	Clearly, for every countable Steiner triple system $N\in \mathrm{Mod}(\chi_3\wedge\Sigma\wedge\chi_2^{H})$, there is a countable partial Steiner triple system $M\in \mathrm{Mod}(\Sigma_0\wedge\chi_2)$ such that $N \cong H(M)$.
	
	Next, we will prove that $H\colon\mathrm{Mod}(\Sigma_0\wedge\chi_2)\to\mathrm{Mod}(\Sigma)$ is a faithful Borel reduction.
	\begin{enumerate}[label=(\itshape\alph*)]
		\item Clearly, $H$ is Borel.
		\item For all $M, M'\in\mathrm{Mod}(\Sigma_0\wedge\chi_2)$, let $N$ and $N'$ denote the Steiner triple systems generated by $M$ and $M'$ respectively, as in the previous construction.
		\begin{enumerate}[label=(\itshape\roman*)]
			\item If $\varepsilon\colon M\to M'$ is an isomorphism, then $N \cong N'$, and thus, $H(M) \cong H(M')$.
			\item If $H(M) \cong H(M')$, then there is an isomorphism $\pi\colon N \to N'$ so that $\pi|_{\delta(\cdot)}\colon M\to M'$ gives an isomorphism from $M$ to $M'$.
        \end{enumerate}
		\item $H$ is faithful. Let $\hC = \mathrm{Mod}(\varphi)$ be an invariant Borel class in $\mathrm{Mod}(\Sigma_0\wedge\chi_2)$, where $\varphi$ is an $L^E_{\omega_1,\omega}$-sentence. It is easy to verify that
		$$[H(\hC)]_{S_\infty} = [H(\mathrm{Mod}(\varphi))]_{S_\infty} = \mathrm{Mod}(\varphi^{H}\wedge\chi_3\wedge\Sigma\wedge\chi_2^{H}).$$
		Hence, $[H(\hC)]_{S_\infty}$ is Borel, and thus $H$ is faithful.
		
		\item In Step 2 of the construction of $H$, for every block of $M$ we attach a Steiner triple system $T$ to get $M^T$. 

\flushleft\textbf{Claim:} $\Aut(M)\cong\Aut(M^T)$.
\flushleft\emph{Proof of the Claim.}
Note that $\Aut(T) = S_3$, and the restriction of every automorphism of $T$ on $\{2,13,15\}$ is a permutation. Also, every permutation of $\{2,13,15\}$ in $T$ can be extended to an automorphism of $T$. For each $\sigma\in\Aut(M)$, we extend $\sigma$ to $\sigma^T\in\Aut(M^T)$ in the following way. Let $B$ be a block of $M$. From the construction of $M^T$, there is a Steiner triple system $S\subseteq M^T$ and an isomorphism $\alpha\colon T\to S$ such that $\{\alpha(2),\alpha(13),\alpha(15)\}=B$. Also, there is an isomorphism $\beta\colon T\to \sigma(S)$ such that $\{\beta(2),\beta(13),\beta(15)\}=\sigma(B)$. Then, there is a permutation $\pi$ of $\{2,13,15\}$ such that $\sigma(\alpha(2))=\beta(\pi(2))$, $\sigma(\alpha(13))=\beta(\pi(13))$, $\sigma(\alpha(15))=\beta(\pi(15))$. The permutation $\pi$ can be extended to an automorphism $\hat\pi$ of $T$. Define $\sigma^T(\alpha(i))\coloneqq \beta(\hat\pi(i))$ for $1\leq i\leq 15$. Since $B$ is arbitrary, $\sigma^T$ is defined on $M^T$. It is easy to verify that (i) $\sigma^T\in\Aut(M^T)$, (ii) $(\sigma_1\sigma_2)^T=\sigma_1^T\sigma_2^T$ for $\sigma_1,\sigma_2\in\Aut(M)$, and (iii) $\Id_M^T=\Id_{M^T}$. Define $f\colon\Aut(M)\to\Aut(M^T)$ by $\sigma\mapsto\sigma^T$. Then, $f$ is a homomorphism. On the other hand, take $\Phi\in\Aut(M^T)$. Clearly, $\Phi$ maps subsystems that are isomorphic to $T$ to subsystems isomorphic to $T$. Because $\{2,13,15\}$ can be identified from $T$, we have that for every block $B$ of $M$, $\Phi(B)$ is also a block of $M$. Since every point in $M$ is in a block of $M$, we have that $\Phi(M)=M$. It is easy to verify that (i) $\Phi|_M\in\Aut(M)$, (ii) $(\Phi_1\Phi_2)|_M=\Phi_1|_M\Phi_2|_M$ for $\Phi_1,\Phi_2\in\Aut(M^T)$, and (iii) $\Id_{M^T}|_M=\Id_M$. Define $g\colon\Aut(M^T)\to\Aut(M)$ by $\Phi\mapsto\Phi|_M$. Then, $g$ is a homomorphism. Note that $f\circ g=\Id$ and $g\circ f=\Id$. Hence, $\Aut(M)\cong\Aut(M^T)$. \hfill $\Box_{Claim}$

Note that $N$ is freely generated by $M^T$. During the free generation construction, no finite Steiner triple subsystems are generated, and thus all subsystems that are isomorphic to $T$ are in $M^T$. Let $\tau\in\Aut(N)$. Since every point in $M^T$ is in a subsystem isomorphic to $T$, we have that $\tau(M^T)=M^T$. Then by Lemma \ref{free generation preserves Aut}, $\Aut(M^T)\cong\Aut(N)$, and thus, $\Aut(M)\cong\Aut(H(M))$.
	\end{enumerate}

By Proposition~\ref{hg} and Theorem~\ref{pSTS}, $\theta=H\circ G\circ F$ is as desired.
\qed

\subsection*{Acknowledgements}
We would like to thank Tao Feng for helpful discussions related to this project and for assisting us in using Magma. This work grew out of Guangyin Ma's 2024 master's thesis at Beijing Jiaotong University, supervised by Shichang Song.
This work was partially supported by ``National Natural Science Fund of China" with Grant No.\,12271023, and by the Fundamental Research Funds for the Central Universities, Beijing Jiaotong University, Grant No.\,2025JBZX010.

AI models were never used in this project.


\begin{thebibliography}{99}




\normalsize
\baselineskip=17pt


\bibitem{BC} S.\,Barbina, E.\,Casanovas, \emph{Model theory of Steiner triple systems}, J. Math. Log. 20 (2020) 2050010.
\bibitem{CG} R.\,Camerlo, S.\,Gao, \emph{The completeness of the isomorphism relation for countable Boolean algebras}, Trans. Amer. Math. Soc. 353 (2011) 491--518.
\bibitem{CCP} J.\,Clemens, S.\,Coskey, S.\,Potter, \emph{On the classification of vertex-transitive structures}, Arch. Math. Log. 58 (2019) 565--574.
\bibitem{CR} C.\,Colbourn, A.\,Rosa, \emph{Triple Systems}. Oxford: Oxford University Press, 1999.
\bibitem{CD} C.\,Colbourn, J.\,Dinitz, \emph{Handbook of Combinatorial Designs}, Second Edition, New York: Chapman and Hall/CRC, 2006.
\bibitem{FS} H.\,Friedman, L.\,Stanley, \emph{A Borel reducibility theory for classes of countable structures}, J. Symb. Log. 54 (1989) 894--914.
\bibitem{G} S.\,Gao, \emph{Invariant Descriptive Set Theory}, Pure and Applied Mathematics, vol. 293, Boca Raton: Taylor \& Francis Group, 2009.
\bibitem{HW} D.\,Horsley, B.\,S.\,Webb, \emph{Countable homogeneous Steiner triple systems avoiding specified subsystems}, J. Comb. Theory, Ser. A 180 (2021) 105434.
\bibitem{LU} M.\,C.\,Laskowski, D.\,S.\,Ulrich, \emph{Borel complexity of modules}, Ann. Pure Appl. Logic, 177 (2026) 103780.
\bibitem{PS} G.\,Paolini, S.\,Shelah, \emph{Torsion free abelian groups are Borel complete}, Ann. of Math. (2) 199 (2024) 1177--1224.
\bibitem{PS2} G.\,Paolini, S.\,Shelah, \emph{Torsion-free abelian groups are faithfully Borel complete and pure embeddability is a complete analytic quasi-order}, Sci. China Math. 68 (2025) 2809--2814.

\end{thebibliography}
\end{document}